\newcommand{\N}{\mathbb{N}}
\newcommand{\Hom}{\operatorname{Hom}}
\newcommand{\soc}{\operatorname{soc}}
\newtheorem{Theorem}{Theorem}[section]
\newtheorem{Lemma}[Theorem]{Lemma}
\newtheorem{Corollary}[Theorem]{Corollary}
\theoremstyle{definition}
\author{Joel Kamnitzer}
\author{Chandrika Sadanand}
\title[Modules with 1-dimensional socle]{Modules with 1-dimensional socle and components of Lusztig quiver varieties in type A}
\date{\today}
\begin{document}
\maketitle

\section{Introduction}
For any simply-laced Kac-Moody Lie algebra $ \mathfrak{g} $, Lusztig \cite{L} has constructed canonical bases for its representations using the geometry of quiver varieties.  In particular, Lustzig considered the variety $Rep(w)_v $ of representations of the preprojective algebra $ \Lambda $ on a fixed vector space of dimension $ v $ and having dimension of socle bounded by $ w $.  The irreducible components of this variety index Lusztig's canonical basis for a particular weight space of a highest weight representation of $ \mathfrak{g} $.  The components of $ Rep(w)_v $ are also in natural bijection with the components of Nakajima's Lagrangian quiver varieties.  This is shown in the work of Saito \cite[section 4.6]{Sai}, who also studied a crystal structure on these components jointly with Kashiwara \cite{KS}.

Because the components of $ Rep(w)_v $ index the canonical basis, it would be interesting to descibe them in an explicit fashion using known combinatorics.  In certain special cases (including $ \mathfrak{g} = \mathfrak{sl}_n $), this has been done by Savage \cite{Sav}, using ad-hoc methods.  In a forthcoming paper \cite{BK}, Pierre Baumann and the first author will use module-theoretic means to give a uniform description of the components using the theory of MV polytopes \cite{K}.  In our description, a key role is played by certain $\Lambda$-modules with one dimensional socle.  

In the current paper, we focus on the case $ \mathfrak{g} = \mathfrak{sl}_n $.  Using elementary means, we classify $\Lambda$-modules with one dimensional socle and explain how these modules can be used to describe components of $ Rep(w)_v $.  Similar results (and more) will be formulated and proved for general $ \mathfrak{g} $ in \cite{BK}.

More specifically in section \ref{se:class}, we classify $\Lambda$-modules with one dimensional socle by showing that they are all isomorphic to certain Maya modules introduced by Savage \cite{Sav}.  These Maya modules are in bijection with subsets of $ \{1, \dots, n \} $ (other than $ \{1, \dots, i \} $).  Next, we compute the space of homomorphisms between two such modules, obtaining an explicit combinatorial formula.  We show that this formula is related to a truncated permutahedron, which is the MV polytope for this situation. 

In section \ref{se:tableaux}, we show how Maya modules can be used to describe the components of $ Rep(w)_v$.  We begin by computing the space of homomorphisms between certain Maya modules and modules associated to tableaux by Savage \cite{Sav}.  We use this to rephrase Savage's description of the components in a module-theoretic fashion.

\subsection{Acknowledgements}
We would like to thank Pierre Baumann for allowing us to use some ideas from our forthcoming joint work.  We would also thank Alistair Savage for his generous explanations.  We thank Bernard Leclerc and Bernhard Keller for helpful conversations.  Finally, we thank Lucy Zhang and the referee for their comments on this paper.  Part of this work was conducted by C.S. as an NSERC undergraduate summer research project.

\section{Background}

\subsection{Notation}
Let $ Q $ denote the root lattice of $ SL_n $.  So 
\begin{equation*}
 Q = \{ (x_1, \dots, x_n) \in \mathbb{Z}^n: \sum x_i = 0 \}.
\end{equation*}

For $ i = 1, \dots, n-1$, let $ \alpha_i = (\dots, 0, 1, -1, 0, \dots) $ denote the simple roots (the 1 is in the $i$th position).  Let $ Q_+ $ be the subset of $ Q$ given by non-negative sums of the $ \alpha_i $.  Let $ \omega_i = (1, \dots, 1, 0, \dots, 0) $ denote the fundamental weights (the first $ i $ entries are 1s).

If $ A $ and $ B $ are $ i $ element subsets of $ \{1, \dots, n \} $, then we define $$ A - B := 1_A - 1_B \in Q $$ where $1_A $ is the $n$-tuple which is 1 in positions indexed by numbers in $ A $ and 0 in the other positions.  We write $ A \ge B $ if $ A - B \in Q_+ $.

\subsection{The preprojective algebra}
Let $ \Omega$ be a simply-laced Dynkin quiver (that is a Dynkin diagram with orientation) with edge set $ \Omega $ and vertex set $ I$.  Let $ \Lambda $ denote the preprojective algebra of the quiver $ \Omega $.  By definition $ \Lambda $ is the quotient
\begin{equation*}
P(\Omega \oplus \overline{\Omega})/( \sum_{\tau \in \Omega} \tau \overline{\tau} - \overline{\tau} \tau)
\end{equation*}
of the path algebra of the doubled quiver $ \Omega \oplus \overline{\Omega} $ by the preprojective relation.

For this paper, we will work exclusively with the type $A_{n-1}$ quiver with the leftward orientation.
\begin{equation*}
\xymatrixrowsep{0.3pc}
\xymatrix{
1 & 2 & \cdots & n-1 \\
\bullet  & \bullet \ar[l] & \cdots \ar[l] & \bullet \ar[l]
}
\end{equation*} 

 For this quiver we have vertex set $ I = \{1, \dots, n-1 \} $ and edge sets
\begin{equation*}
\Omega = \{ 2 \to 1, 3 \to 2, \dots, n-1 \to n-2 \}  \quad \overline{\Omega} = \{ 1 \to 2, 2 \to 3, \dots, n-2 \to n-1 \}
\end{equation*}
So a $ \Lambda $-module $M$ consists of an $ I $-graded vector space $ M = \oplus_{i \in I} M_i $ with linear maps
\begin{equation*}
(i \to i+1) : M_i \rightarrow M_{i+1} \quad (i \to i-1) : M_i \rightarrow M_{i-1}
\end{equation*}
such that the preprojective relations 
\begin{align*}
(i+1 \to i)(i \to i+1) = (i-1 \to i)(i \to i-1) \  \text{ for }i = 1, \dots, n-1 
\end{align*}
are satisfied.  Here and later, we adopt the convention that $(1\to 0) : M_1 \rightarrow 0 $ and $ (n-1 \to n) : M_{n-1} \rightarrow 0 $ are 0.

If $ M $ is a $ \Lambda$-module, then it has a dimension vector $ v = (v_i)_{i \in I} \in \N^I$, where $ v_i = \dim(M_i)$.  It will be convenient to encode this as an element of $ Q_+ $ as $ \alpha_v = \sum_i v_i \alpha_i $.

\subsection{Socle of modules}
The only simple $ \Lambda $-modules are the one-dimensional modules $ S_i $, which have dimension 1 in the $i$th slot and 0 elsewhere.

If $ M $ is any $ \Lambda $-module, then the socle of $ M $ is defined to be the maximal semisimple submodule of $ M $.  The $S_i$th isotypic component of the socle of $ M $ is called the $ i$-socle of $ M $ and is denoted $ \soc_i(M)$.  

More explicitly, $ \soc(M) $ is the submodule of $ M $ whose $ i$th graded piece is
\begin{equation*}
\soc_i(M) = \{ w \in M_i : (i \to i+1)(w) = 0 \text{ and } (i \to i-1)(w) = 0  \}
\end{equation*}
All arrows act by $ 0 $ in $ \soc(M)$.

\subsection{Lusztig quiver varieties}
If $ v \in \N^I$, then we may consider the variety $ Rep_v $ of representations of $ \Lambda $ on a fixed $I$-graded vector space of dimension $ v $.  By the work of Lusztig \cite{L}, the irreducible components of $ Rep_v $ index the canonical basis for $ (U \mathfrak{n})_{\alpha_v} $, where $ U \mathfrak{n} $ denotes the universal envelopping algebra of the upper triangular subalgebra of $ \mathfrak{g} $.  In particular, the number of irreducible components of $ Rep_v $ is equal to the Kostant partition function of $ \alpha_v $.  

For each point $ x \in Rep_v $, we can consider the corresponding abstract $\Lambda$-module $ M_x $.  For $ w \in \N^I$,  we consider the variety $ Rep(w)_v $ of consisting of those points $ x \in Rep_v $ with $ \dim \soc_i (M_x) \le w_i $ for all $ i $.  Under Lusztig's construction the components of these varieties are related to the irreducible representations as follows.  Let $ \lambda = \lambda_w := \sum_i w_i \omega_i $ and $ \mu = \lambda_w - \alpha_v $ (here $ \omega_i $ are the fundamental weights).  The irreducible components of $ Rep(w)_v $ index the canonical basis for the $ \mu $ weight space of the irreducible representation $V(\lambda) $ of $SL_n$. 

\section{Modules with one-dimensional socle} \label{se:class}

\subsection{The Maya modules}
Let $ A $ be a proper subset of $ \{1, \dots, n \} $ of size $ i$, other than $ \{1, \dots, i \} $. 

The Maya module $N(A) $ has the following description.  If $ A = \{a_1 < \dots < a_i \}$, then $ N(A) $ has basis 
\begin{equation*}
w_{1, 1}, \dots, w_{a_1 -1, 1}, \dots, w_{k,k}, \dots, w_{a_k-1, k}, \dots, w_{i, i }, \dots, w_{a_i - 1,i}
\end{equation*}
where $w_{j,k} \in N(A)_j $.

We define
\begin{equation} \label{eq:Maya}
\begin{aligned} 
(j \to j-1)(w_{j,k}) &= w_{j-1, k} \\
(j \to j+1)(w_{j,k}) &= w_{j+1, k+1} 
\end{aligned}
\end{equation}

Note that $ N(A) $ has a 1-dimensional socle $ S_i$, spanned by $ w_{i,i} $.

Let us call the span of $ w_{k,k}, \dots, w_{a_k -1, k} $ the $k $th ``row'' of $ N(A) $ and let us call $ N(A)_j $ the $ j$th ``column''. So the $k$th row starts at column $ k $ and extends to column $ a_k -1 $.  This can be seen in the following picture of the module $ N(\{3, 6, 7 \})$.
\begin{equation*}
\xymatrixrowsep{1.5pc}
\xymatrix{
1 & 2 & 3 & 4 & 5 & 6 \\
\bullet \ar[rd] & \bullet \ar[l] \ar[rd] & & & & \\
& \bullet \ar[rd] &  \bullet \ar[l] \ar[rd] &\bullet \ar[l] \ar[rd] &\bullet \ar[l] \ar[rd] & \\
& & \bullet & \bullet \ar[l] &\bullet \ar[l]  &\bullet \ar[l] 
}
\end{equation*}

\begin{Lemma}
Let $ v = \dim(N(A))$. Then $ \alpha_v = \{1, \dots, i \} - A $.
\end{Lemma}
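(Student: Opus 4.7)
The plan is to compute the graded dimensions $v_j = \dim N(A)_j$ directly from the given basis, and then expand $\alpha_v = \sum_j v_j \alpha_j$ in the standard basis of $\mathbb{Z}^n$ and compare with $1_{\{1,\dots,i\}} - 1_A$.

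First, I would read off from the definition of the basis that $v_j$ equals the number of indices $k \in \{1,\dots,i\}$ satisfying $k \le j \le a_k - 1$, i.e., $k \le j < a_k$. Splitting this condition gives
\begin{equation*}
v_j = |\{k : k \le j\}| - |\{k : a_k \le j\}| = \min(i,j) - |A \cap \{1,\dots,j\}|,
\end{equation*}
where for the second equality I use that $a_k \ge k$, so $a_k \le j$ already forces $k \le j$, reducing the count to $|A \cap \{1,\dots,j\}|$.

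Next I would use $\alpha_j = e_j - e_{j+1}$ (with $e_1,\dots,e_n$ the standard basis of $\mathbb{Z}^n$) and telescope:
\begin{equation*}
\alpha_v = \sum_{j=1}^{n-1} v_j(e_j - e_{j+1}) = \sum_{j=1}^{n} (v_j - v_{j-1})\, e_j,
\end{equation*}
adopting the convention $v_0 = v_n = 0$. From the closed form above, $\min(i,j) - \min(i,j-1)$ is $1$ if $j \le i$ and $0$ otherwise, and $|A \cap \{1,\dots,j\}| - |A \cap \{1,\dots,j-1\}|$ is $1$ if $j \in A$ and $0$ otherwise. Hence $v_j - v_{j-1} = [j \le i] - [j \in A]$, which is precisely the $j$-th coordinate of $1_{\{1,\dots,i\}} - 1_A$.

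There is no real obstacle; the only point to watch is the compatibility of the boundary values $v_0$ and $v_n$ with the telescoping, so that the resulting $n$-tuple actually lies in $Q$ rather than just in $\mathbb{Z}^n$. This is automatic: the closed form gives $v_0 = 0$ and $v_n = i - |A| = 0$, matching the hypothesis that $|A| = i$, and ensuring that the two sides of the desired equality agree coordinate by coordinate in $\mathbb{Z}^n$.
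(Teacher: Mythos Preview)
Your proof is correct and follows essentially the same approach as the paper: both start from the formula $v_j = |\{k : k \le j < a_k\}|$ read off from the basis, and then deduce the identity for $\alpha_v$. The paper simply asserts that the result follows immediately from this formula, whereas you spell out the telescoping computation explicitly.
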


\begin{proof}
Since we have an explicit basis for $ N(A) $ it is easy to see that
\begin{equation*}
\dim N(A)_j = | \{ r \in \{1, \dots, i \} : r \le j < a_r \} |
\end{equation*}
From this, the desired result follows immediately.
\end{proof}

\subsection{The uniqueness theorem}
We will now show that every $ \Lambda $-module with 1-dimensional socle is isomorphic to a Maya module.

We start by characterizing the dimension vectors of modules with 1-dimensional socle.  If $ v $ is a dimension vector, we will extend $ v $ by defining $ v_0 = 0 = v_n $ (this will eliminate some special cases below).

\begin{Lemma} \label{th:dimcond}
Let $M $ be a module with socle $ S_i $.  Let $ v = \dim(M) $.  Then
\begin{gather}
\label{eq:left} v_j=v_{j+1} \ \mathrm{or} \   v_j +1 = v_{j+1}, \text{ for all } j<i \\
\label{eq:right} v_{j-1}=v_j \ \mathrm{or} \  v_{j-1}= v_j +1, \text{ for all } j>i
\end{gather}
\end{Lemma}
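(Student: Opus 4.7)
The plan is to extract both inequalities from dimension bounds on the kernels of the arrows of $M$, using telescoping sums governed by the preprojective relation. For $j<i$, I need both $v_j\le v_{j+1}$ (equivalently, that $(j\to j+1):M_j\to M_{j+1}$ is injective) and $v_{j+1}\le v_j+1$, which by rank-nullity applied to $(j+1\to j):M_{j+1}\to M_j$ follows once I bound $\dim\ker((j+1\to j))\le1$.

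The engine is a pair of ``transport'' maps at each vertex $k$. The preprojective relation $(k+1\to k)(k\to k+1)=(k-1\to k)(k\to k-1)$ implies that $(k\to k+1)$ restricts to a well-defined linear map
$$T_k:\ker((k\to k-1))\longrightarrow \ker((k+1\to k)),$$
whose kernel is exactly $\ker((k\to k-1))\cap\ker((k\to k+1))=\soc_k(M)$, and symmetrically $(k\to k-1)$ restricts to $T'_k:\ker((k\to k+1))\to\ker((k-1\to k))$ with $\ker T'_k=\soc_k(M)$. Rank-nullity on $T_k$ yields $\dim\ker((k\to k-1))-\dim\ker((k+1\to k))\le\dim\soc_k(M)$, and similarly for $T'_k$.

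Telescoping these inequalities from $k=j+1$ up to $k=n-1$, and using the boundary convention $\ker((n\to n-1))=0$, gives
$$\dim\ker((j+1\to j))\le\sum_{k=j+1}^{n-1}\dim\soc_k(M).$$
Because $M$ has socle supported at the single vertex $i$, this sum equals $1$ if $j+1\le i$ and $0$ otherwise, delivering $v_{j+1}-v_j\le 1$. The dual telescoping of the $T'_k$ from $k=1$ to $k=j$, with $\ker((0\to 1))=0$, gives
$$\dim\ker((j\to j+1))\le\sum_{k=1}^{j}\dim\soc_k(M),$$
which vanishes for $j<i$, so $(j\to j+1)$ is injective and $v_j\le v_{j+1}$. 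The condition for $j>i$ is the mirror image and follows by the same telescoping run in the opposite direction across the quiver.

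The only nontrivial step, and the main obstacle, is verifying that $\ker T_k$ and $\ker T'_k$ are genuinely $\soc_k(M)$; this is a direct calculation from the preprojective relation together with the explicit description of $\soc_k(M)$. Everything else is then bookkeeping of a telescoping sum that collapses to the single nonzero term $\dim\soc_i(M)=1$.
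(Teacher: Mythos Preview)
Your argument is correct. It rests on the same mechanism as the paper's proof---the preprojective relation transports $\ker(k\to k-1)$ into $\ker(k+1\to k)$ via $(k\to k+1)$, with defect exactly $\soc_k(M)$---but you package it differently. The paper argues by contradiction and case analysis: if $v_j>v_{j+1}$ it chases a nonzero element of $\ker(j\to j+1)$ leftward step by step until it produces socle at vertex $1$; if $v_{j+1}\ge v_j+2$ it chases the $\ge 2$-dimensional $\ker(j+1\to j)$ rightward, loses one dimension passing through vertex $i$, and still finds socle at vertex $n-1$. You instead name the transport maps $T_k,T'_k$ explicitly, identify their kernels as $\soc_k(M)$, and telescope the rank--nullity inequalities to bound each kernel dimension by a sum of socle dimensions that collapses to $0$ or $1$. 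Your version is cleaner and more uniform (no case split, no contradiction, and it yields the sharper quantitative bound $\dim\ker(j+1\to j)\le\sum_{k>j}\dim\soc_k(M)$ for free); the paper's version is perhaps more concrete in that one literally follows a vector along the quiver until it lands in the socle.
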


\begin{proof}
Suppose that (\ref{eq:left}) does not hold for some $ j < i $.
Then either $v_j >v_{j+1}$ or $v_j+a=v_{j+1}$ for some $ a>1$.

Suppose that $v_j>v_{j+1}$.  Then $\dim\ker(j \to j+1)>0 $.

Consider a non-zero element $w \in \ker(j \to j+1)$. Then 
\begin{equation*}
(j+1 \to j)\circ (j \to j+1)(w)=0  \Rightarrow
(j-1 \to j)\circ (j \to j-1)(w)=0
\end{equation*}
But, $(j \to j-1)(w)\ne 0$, since $M$ has no $j$-socle. Hence
\begin{equation*}
\dim\ker(j-1 \to j)>0.
\end{equation*}
Continuing in this manner we see that $ \dim\ker(1 \to 2)>0 $.  This means that $ M $ has $1$-socle, a contradiction.

Now, suppose that $v_j+a=v_{j+1}$ for some $a>1$.  Assume $ j + 1 < i $.  In this case,
\begin{equation*}
\dim\ker(j+1 \to j) \ge a >1
\end{equation*}
Let $ w \in \ker(j+1 \to j) $, then 
\begin{equation*}
(j \to j+1)\circ (j+1 \to j)(w)=0 \Rightarrow
(j+2 \to j+1)\circ (j+1 \to j+2)(w)=0 
\end{equation*}
But $(j+1 \to j+2)(w)\neq 0$, since $M$ has no $j+1$-socle. Therefore $ (j+1 \to j+2) $ gives us an injective map from $ \ker(j+1 \to j) $ to $ \ker(j+2 \to j+1) $ and so
\begin{equation*}
\dim \ker(j+2 \to j+1) \geq a
\end{equation*}
Continuing in this manner, when we reach $ i $, we see that since the socle is one-dimensional, it must be the case that $ \dim \ker \big( (i \to i+1) |_{\ker (i \to i-1)} \big) = 1 $ and hence we find
\begin{equation*}
\dim \ker(i+1 \to i)\geq a-1>0
\end{equation*}
Again, we consider an element $w \in \ker (i+1 \to i)$. Then
\begin{equation*}
(i \to i+1) \circ (i+1 \to i) (w) = 0 \Rightarrow
(i+2 \to i+1) \circ (i+1 \to i+2) (w) = 0
\end{equation*}
Again, since $M $ does not have $i+1$-socle,
\begin{equation*}
\dim \ker(i+2 \to i+1)>0
\end{equation*}
Continuing in this manner, we see that $\dim \ker(n-1 \to n-2)>0$, which implies that $M$ has $(n-1)$-socle. This is a contradiction.

The proof of (\ref{eq:right}) follows similarly.
\end{proof}

\begin{Lemma}
Suppose that $ v \in \N^I $ satisfies the condition (\ref{eq:left}) and (\ref{eq:right}). 
 Then $ \alpha_v = \{1, \dots, i \} - A $ for some $i $ element subset of $\{1, \dots, n\} $, $ A  \ne \{1, \dots, i\}$.
\end{Lemma}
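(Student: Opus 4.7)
The plan is to construct $A$ explicitly by reading off the increments of the sequence $v_0 = 0, v_1, \dots, v_{n-1}, v_n = 0$. First, I would expand
\begin{equation*}
\alpha_v = \sum_{j=1}^{n-1} v_j (e_j - e_{j+1})
\end{equation*}
and, using $v_0 = v_n = 0$, observe that the $k$th coordinate of $\alpha_v$ equals $v_k - v_{k-1}$. Conditions (\ref{eq:left}) and (\ref{eq:right}) then translate into the statement that this coordinate lies in $\{0, 1\}$ for $k \le i$ and in $\{-1, 0\}$ for $k > i$.

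This makes the definition of $A$ essentially forced. I would set
\begin{equation*}
A := \{ k \in \{1, \dots, i\} : v_k = v_{k-1} \} \cup \{ k \in \{i+1, \dots, n\} : v_{k-1} - v_k = 1 \}.
\end{equation*}
By construction, $1_{\{1,\dots,i\}} - 1_A$ agrees with $\alpha_v$ coordinate by coordinate, so $\alpha_v = \{1, \dots, i\} - A$. To verify $|A| = i$, one counts flat and down steps: the sequence $v_0, \dots, v_i$ has exactly $v_i$ up-steps and hence $i - v_i$ flat steps (these are the elements of $A \cap \{1, \dots, i\}$), while the sequence $v_i, \dots, v_n$ has exactly $v_i$ down-steps (these are the elements of $A \cap \{i+1, \dots, n\}$), totalling $i$.

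It remains to check $A \ne \{1, \dots, i\}$. This equality would force every step to be flat, giving $v = 0$. So under the implicit assumption that $v \ne 0$---natural here, since the lemma characterizes dimension vectors of nonzero modules with socle $S_i$---the conclusion follows. The entire argument is straightforward bookkeeping; the only mild subtlety is the sign asymmetry between positions $\le i$ and $> i$, which slightly complicates the counting but is not a real obstacle.
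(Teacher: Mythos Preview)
Your proof is correct and follows essentially the same approach as the paper: compute the coordinates of $\alpha_v$ as successive differences $v_k - v_{k-1}$, observe that conditions (\ref{eq:left}) and (\ref{eq:right}) constrain these to $\{0,1\}$ or $\{-1,0\}$ depending on position, and read off $A$ accordingly. If anything, your version is more complete, since you explicitly verify $|A| = i$ by the step-counting argument and you correctly flag that $A \ne \{1,\dots,i\}$ requires the tacit hypothesis $v \ne 0$ (which is implicit in the intended application but not strictly in the lemma as stated).
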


\begin{proof}
Let $\alpha_v=\sum ^{n-1}_{j=1}v_j\alpha _j$, and let $x_j$ be the $j^{th}$ coordinate of $ \alpha_v $. Then for each $ j = 1, \dots, n $, we have
\begin{align*}
x_j=1 & \iff v_j=v_{j-1}+1, \\
x_j=-1 & \iff v_j+1=v_{j-1}, \\
x_j=0 & \iff v_j=v_{j-1}.
\end{align*}
Also note that $x_j=1 \Rightarrow j \le i$ and $x_j=-1 \Rightarrow j>i$. So define 
$$ A := \{j \le i : x_j = 0 \} \cup \{j > i: x_j = 1 \}$$
and then it is easily seen that $ A $ has the desired properties.
\end{proof}

\begin{comment}
The following Lemma will also be very useful.

\begin{Lemma} \label{th:goingtosocle}
Let $M $ be a module with socle $ S_i $.  Let $ k $ be the largest index such that $ \dim(M_k) \ne 0 $.  Then there exists $ w \in M_k $ such that $ (i+1 \to i) \cdots (k \to k-1) (w) $ is a basis for the socle of $ M $.
\end{Lemma}

\begin{proof}
Consider the submodule generated by $ M_k $.  This submodule is non-zero (since $ M_k $) is non-zero and hence it contains the socle of $M $ (since the socle of $ M $ is only one dimensional).  So there exists a vector $ w \in M_k $ and an element $ \rho \in \Lambda $ such that $ \rho w $ is a basis for the socle of $ M $.  Since $ \rho \in \Lambda $ it is a sum of paths in the double quiver.  Assume for the moment that $ \rho $ is a single path.  So $ \rho $ consists of rightward and leftward steps.  Using the preprojective relations, we can assume that all of the rightward steps come before the leftward steps.  However, if there are any rightwards steps at the beginning, these will annihilate $ w $ (by the definition of $ k $).  Hence, we see that if $ \rho w $ is a basis for the socle of $M $, then it must be the case that $   (i+1 \to i) \cdots (k \to k-1) (w) $ is a basis for the socle of $M $.
\end{proof}
\end{comment}

Now we formulate and prove the uniqueness statement.   

\begin{Theorem} \label{th:uniquesoc}
Let $M $ be a module with socle $ S_i $ and dimension $ v $.  Let $ A $ be such that $ \alpha_v = \{1, \dots, i \} - A$.  Then $ M \cong N(A) $.
\end{Theorem}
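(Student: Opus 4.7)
The plan is to construct an explicit isomorphism $\phi\colon N(A)\to M$ by producing vectors $m_{j,k}\in M_j$ that serve as images of the Maya basis elements $w_{j,k}$, and then verifying that the assignment respects the preprojective algebra action. Since Lemma~\ref{th:dimcond} together with the hypothesis $\alpha_v=\{1,\dots,i\}-A$ yields $\dim N(A)=\dim M$, the map $\phi$ will automatically be an isomorphism once the $m_{j,k}$ are shown to satisfy the Maya relations \eqref{eq:Maya} and to be linearly independent.

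The first stage is to construct the ``diagonal'' vectors $m_{k,k}\in M_k$ for $k=1,\dots,i$. I would fix $m_{i,i}$ to be a nonzero generator of $\soc_i(M)=S_i$ and then, by downward induction on $k$, find $m_{k,k}\in\ker(k\to k-1)\subseteq M_k$ satisfying $(k\to k+1)(m_{k,k})=m_{k+1,k+1}$. The existence of this lift is the technical heart of the proof: using the preprojective relation $(j-1\to j)(j\to j-1)=(j+1\to j)(j\to j+1)$ together with the socle condition (for $j<i$, $\soc_j(M)=0$ means $\ker(j\to j-1)\cap\ker(j\to j+1)=0$), the restriction $(j\to j+1)\colon \ker(j\to j-1)\to \ker(j+1\to j)$ is injective; a dimension comparison via Lemma~\ref{th:dimcond} then forces this injection to be a bijection, so $m_{j+1,j+1}$ has the required preimage.

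Once the diagonal is built, I would extend each row rightward: for each $k$ and $j$ with $k<j<a_k$, set $m_{j,k}$ to be a preimage of $m_{j-1,k}$ under $(j\to j-1)$, so that the left-arrow Maya relation holds by construction; existence of preimages follows from a parallel kernel-image dimension count using Lemma~\ref{th:dimcond}. The right-arrow Maya relation $(j\to j+1)(m_{j,k})=m_{j+1,k+1}$ is verified by applying $(j+1\to j)$ to both sides and using the preprojective relation at $j$: the two sides agree modulo $\ker(j+1\to j)$, and an induction on $j-k$ eliminates the discrepancy. Linear independence of the resulting family follows by tracking how compositions of arrows act on each $m_{j,k}$. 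The main obstacle is the existence of the diagonal lifts, where the socle condition and the dimension-vector constraints must together guarantee that $(k\to k+1)$ restricts to a bijection between the relevant kernels.
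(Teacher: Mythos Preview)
Your overall strategy---constructing the Maya basis directly rather than by induction on $|A|$---is genuinely different from the paper's argument, which peels off the bottom row as a submodule $N\cong N(\{1,\dots,i-1,a_i\})$, shows that $M/N$ has socle $S_{i-1}$, applies the inductive hypothesis to $M/N$, and only then adjusts the resulting basis to satisfy \eqref{eq:Maya}. Your direct approach is attractive and can be made to work, but as written it has a real gap in the verification of the right-arrow relation.

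The problem is your claim that ``an induction on $j-k$ eliminates the discrepancy.'' Your argument correctly shows (via the preprojective relation and the left-arrow relation) that $(j\to j+1)(m_{j,k})-m_{j+1,k+1}\in\ker(j+1\to j)$. For $j+1>i$ this kernel is zero and you are done, but for $j+1\le i$ it is one-dimensional, spanned by $m_{j+1,j+1}$, and nothing in the induction forces the discrepancy to vanish. Indeed, with \emph{arbitrary} choices of preimages $m_{j,k}$ under $(j\to j-1)$ the right-arrow relation will typically fail. What is actually needed is to \emph{use} the ambiguity in the lift: $m_{j,k}$ is only determined modulo $K_j=\ker(j\to j-1)$, and one must adjust it by the appropriate multiple of $m_{j,j}$ to kill the discrepancy. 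These adjustments cascade along each anti-diagonal and must be made in a coherent order; carrying this out is essentially the same work as the basis-correction step in the paper's inductive proof. A smaller point: the ``dimension comparison via Lemma~\ref{th:dimcond}'' for the diagonal lifts is less immediate than you suggest. The clean route is to first show $\dim\ker(i\to i-1)=1$ (using injectivity of $(i+1\to i)$, as in the proof of Lemma~\ref{th:dimcond}) and then conclude from the chain of injections $K_1\hookrightarrow\cdots\hookrightarrow K_i$ that every $K_j$ with $j\le i$ has dimension at most $1$; surjectivity of $K_j\hookrightarrow K_{j+1}$ for the relevant $j$ then follows.
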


This result is well-known to experts.  For example, it follows from the fact that certain Nakajima quiver varieties are 0-dimensional.  It can also be proved using the crystal structure on components of quiver varieties (due to Kashiwara-Saito \cite{KS}).   Here we prefer to give an elementary argument.

\begin{proof}
Our goal is to find a basis for $ M $ whose module structure matches the Maya module structure (\ref{eq:Maya}).  Let $ A = \{ a_1 < \dots < a_i \}$.

Let $ w_{i,i} \in M_i $ be a basis for the socle of $M$. Assume that $ a_i > i+1 $.  We claim that there exists $ w_{i+1, i} \in M_{i+1} $ such that $ (i+1 \to i)(w_{i+1, i}) = w_{i,i} $.   

Suppose that no such $ w_{i+1, i} $ exists.  From the proof of Lemma \ref{th:dimcond}, we see that $ w_{i,i} $ spans the kernel of $ (i \to i+1) $.   Hence if $ w_{i,i} $ is not in the image of $ (i+1 \to i) $, then $ (i \to i+1) \circ (i+1 \to i) $ is an isomorphism.  By the preprojective relations, this means that $ (i+2 \to i+1) \circ (i+1 \to i+2) $ is an isomorphism.  From the proof of Lemma \ref{th:dimcond}, we know that $(i+2 \to i+1) $ is injective, so both $ (i+2 \to i+1) $ and $ (i+1 \to i+2) $ are isomorphisms.  Hence $ (i+1 \to i+2) \circ (i+2 \to i+1) $ is an isomorphism.  Continuing in this fashion, we find that all $ (j \to j+1) $ are isomorphisms for $ j \ge i $ and so we see that $ v_{i+1} = \dots = v_n = 0 $.  This contradicts $ a_i > i+1 $.

By a similar argument, there exist $ w_{i+2, i}, \dots, w_{a_i - 1, i} $ such that
\begin{equation*}
w_{i,i} \xleftarrow{i+1 \to i} w_{i+1, i} \xleftarrow{i+2 \to i+1} \dots \xleftarrow{a_i-1 \to a_i -2} w_{a_i -1, i}.
\end{equation*}
Since $ (i \to i-1)(w_{i,i}) = 0 $, from the preprojective relations we find that $ (k \to k+1) (w_{k,i}) = 0 $ for all $ k $.  Thus $ w_{i, i}, \dots, w_{a_i - 1, i} $ spans a submodule which we denote by $ N $.  Note that $ N \cong N(\{1, \dots, i-1, a_i \})$.  

If $ M =N $, then we are done.  Suppose that $N \ne 0 $ and consider the quotient module $ M/N $.  Since $ \dim M/N = \dim M - \dim N $, we see that if $ v' = \dim M/N $, then $$ \alpha_{v'} = \{1, \dots, i-1\} - \{a_1, \dots, a_{i-1} \}.$$ 
We claim that $ \soc(M/N) = S_{i-1} $.  As above, there exists $ w \in M_{i-1} $ such that $ (i-1 \to i)(w) = w_{i,i} $ and as above $ (i-1 \to i-2)(w) = 0 $.  Hence $ [w] \in \soc(M/N) $.  

To see that there is no other socle, note that if $ [u] \in \soc(M/N)_j $, then $ (j \to j-1)(u) \in N $ and $ (j \to j+1)(u) \in N $.  Suppose that $ j < i-1$, then $ (j \to j+1)(u) \in N $ implies that $(j \to j+1)(u) = 0 $ which implies that $ u = 0 $ since $(j \to j+1) $ is injective (as in the proof of Lemma \ref{th:dimcond}).  Suppose that $j = i-1$, then the injectivity of $ (j \to j+1) $ forces $ u = w $.  If $ j = i $, then the $ [u] = 0 $, since the kernel of $ (i \to i-1) $ is spanned by $ w_{i,i} $.  Similarly if $ j > i $, then $ [u] = 0 $, since $(j \to j-1) $ is injective (as in the proof of Lemma \ref{th:dimcond}), so $ u $ must be a multiple of $ w_{j,i} $.

Thus, we have shown that $ \soc(M/N) = S_{i-1} $.  Thus by the induction hypothesis, we see that $ M/N \cong N(\{a_1, \dots, a_{i-1} \}) $ and we obtain a short exact sequence of $ \Lambda$-modules
\begin{equation*}
0 \rightarrow N \rightarrow M \rightarrow N(\{a_1, \dots, a_{i-1} \}) \rightarrow 0.
\end{equation*}
Let us pick a vector space splitting.  Thus combining the standard basis of $ N(\{a_1, \dots, a_{i-1}\}) $ with the above basis of $N $, we obtain a basis $ w_{k,l} $ for $ M $ with $ l = 1, \dots, i $ and $ k = l, \dots, a_l-1 $.  This module structure with respect to this basis does not match (\ref{eq:Maya}), since extra terms involving the basis for $N $ may enter into the result of applying quiver arrows to the basis elements of $ N(\{a_1, \dots, a_{i-1} \})$.  Hence we will now adjust our basis.

In particular, for each $ l = 1, \dots, i-1 $ and $ k = i+1, \dots a_l-1 $, we see that there is a scalar $ c_{k,l}$ such that
\begin{equation*}
(k \to k-1)(w_{k,l}) = w_{k-1, l} + c_{k,l}w_{k-1,i}
\end{equation*}
We may eliminate this scalar by setting $ w'_{k,l} = w_{k,l} - c_{k,l} w_{k,i}$ for these $ (k,l) $ and $ w'_{k,l} = w_{k,l} $ otherwise.  

Next, note that $ (i-1 \to i)(w_{i-1, i-1}) = 0 $ in $ N(\{a_1, \dots, a_{i-1}\}) $ and thus $ (i-1 \to i)(w'_{i-1, i-1}) = c w'_{i,i}$ in  $ M $ for some scalar $c $. Since $ M$ has no $ i-1 $ socle, $ c $ is non-zero.  Scaling all $ w'_{k,l} $ by $1/c $ (for $ l < i $), we may assume that $ c = 1$.  It then follows from the preprojective relations that  \begin{equation*}
 (k \to k+1)(w'_{k,i-1}) = w'_{k+1, i} \text{ for all } k = i, \dots, a_{i-1} - 1.
 \end{equation*}

Now consider some $ w'_{k,l} $ for $ l < i-1$ and $ k \ge i-1 $.  Then
\begin{equation*}
(k \to k+1)(w'_{k,l}) = w'_{k+1, l+1} + c_l w'_{k+1, i}
\end{equation*}
for some scalar $ c_l $.  By the preprojective relations $ c_l $ depends only on $ l $.  Then we make the adjustment $ w''_{k,l} = w'_{k,l} - c_l w'_{k, i-1} $ for all $k = i-1, \dots, a_{l-1} - 1 $ and $ w''_{k,l} = w'_{k,l} $ for all other $ (k,l) $.

After all these adjustments, we see that $ w''_{k,l} $ satisfy the Maya module structure $ (\ref{eq:Maya})$.  Thus $ M \cong N(A) $ as desired.
\end{proof}

\subsection{Computation of hom spaces}
Now we compute the space of homomorphisms between Maya modules.

\begin{Theorem} \label{th:homAB}
Let $A$, $B$ be $i$, $j$ element subsets respectively.  Then we have
\begin{align*}
\dim \Hom(N(A),N(B)) = &\text{ \# of $ r \in \{1, \dots, i \} $, such that } r \le j < a_r, \\
& \quad \text{ and } a_{r-l} \le b_{j-l} \text{ for } l = 0, \dots, r-1
\end{align*}
where $ A = \{ a_1 < \dots < a_i \} $ and $ B = \{ b_1 < \dots < b_j \}$.
\end{Theorem}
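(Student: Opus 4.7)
The plan is to exhibit an explicit basis of $\Hom(N(A),N(B))$ indexed by the valid $r$'s from the statement. Let $u_{k,l}$ denote the standard basis of $N(B)$ analogous to $w_{k,m}$ for $N(A)$. For each $r$ satisfying the conditions of the theorem, set $s = j - r \ge 0$ and define
\begin{equation*}
  \phi_r(w_{k,m}) = \begin{cases} u_{k,\,m+s} & \text{if } 1 \le m \le r \text{ and } m+s \le k < b_{m+s},\\ 0 & \text{otherwise.} \end{cases}
\end{equation*}
Intuitively, $\phi_r$ shifts the top $r$ rows of $N(A)$ downward by $s$ rows (preserving the column grading) into $N(B)$ and annihilates the remaining rows. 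The first task is to verify that $\phi_r$ is a $\Lambda$-module homomorphism by checking commutation with each quiver arrow $(k \to k\pm 1)$: the condition $r \le j < a_r$ guarantees that $w_{j,r}$ is in the basis of $N(A)$ and is sent to the socle element $u_{j,j}$ of $N(B)$, while the conditions $a_{r-l} \le b_{j-l}$ (equivalently $a_m \le b_{m+s}$ for $m = 1, \dots, r$) guarantee that all image elements $u_{k,m+s}$ remain in the basis and that the arrow relations match at the right-hand end of each active row.

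For linear independence, order the valid $r$'s as $r_1 < \cdots < r_d$ and suppose $\sum_k c_k \phi_{r_k} = 0$. The homomorphism $\phi_{r_k}$ vanishes on row $m$ of $N(A)$ whenever $m > r_k$, so evaluating at $w_{a_{r_d}-1,\,r_d}$ leaves only the $k = d$ contribution; the value is $c_d\, u_{a_{r_d}-1,\,j}$, a nonzero basis vector of $N(B)$ by $a_{r_d} \le b_j$, forcing $c_d = 0$. A descending induction on $k$ then gives $c_k = 0$ for all $k$.

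For spanning, write an arbitrary $\phi \in \Hom(N(A),N(B))$ in coordinates as $\phi(w_{k,m}) = \sum_l c^{k,m}_l u_{k,l}$ (summed over $l$ with $l \le k < b_l$). Commutation with $(k \to k-1)$ at interior points of a row yields $c^{k-1,m}_l = c^{k,m}_l$, so the coefficient is constant along a row of $N(A)$; commutation with $(k \to k+1)$ at interior points yields the shift relation $c^{k+1,m+1}_{l+1} = c^{k,m}_l$. Together these force $c^{k,m}_l$ to depend only on the difference $l - m$, so $\phi$ is recorded by a single sequence $d_t := c^{k,m}_{m+t}$. Substituting the same commutation relations into the boundary cases --- the leftmost column $k = m$ of a row, the rightmost column $k = a_m - 1$ for $m < i$, and the rightmost column $k = a_i - 1$ of the socle row $m = i$ --- produces equations forcing $d_t = 0$ for certain $t$. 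These forced vanishings should exactly rule out those $t$ for which $r = j - t$ fails one of the theorem's conditions; once this is verified, $\phi = \sum_{r \text{ valid}} d_{j-r}\, \phi_r$ by the shift-invariance of the remaining $d_t$.

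The main obstacle is this last matching step. The leftmost-column relations yield $t \ge 0$, i.e.\ $r \le j$; the row-constancy relations evaluated at $k = b_{m+s}$ (where the source $u_{k,m+s}$ falls outside the basis of $N(B)$ while the target $u_{k-1,m+s}$ does not) yield the inequalities $a_m \le b_{m+s}$ for $m \in [1, r]$; and the requirement $j < a_r$ reflects that the socle $u_{j,j}$ of $N(B)$ must lie in the image of some $w_{j,r}$, which is a basis element of $N(A)$ exactly when $j < a_r$. One must also verify that the relations coming from the bottom row $m = i$ of $N(A)$ (whose right-boundary arrow has target $w_{a_i,\,i+1}$ outside $N(A)$) impose no spurious constraints beyond these. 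This bookkeeping is mechanical but intricate.
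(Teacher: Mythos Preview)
Your construction of the homomorphisms $\phi_r$ and your linear-independence argument coincide with the paper's (the paper writes $\phi_r(w_{k,r-l}) = w'_{k,j-l}$, which is your formula after the substitution $m = r-l$). The real point of comparison is the spanning argument.

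The paper's spanning argument is short and conceptual: since $N(B)$ has one-dimensional socle spanned by $u_{j,j}$, any nonzero homomorphism must hit $u_{j,j}$, so the map $\psi \mapsto (s_r)_r$ recording the $u_{j,j}$-coefficient of $\psi(w_{j,r})$ is injective. One then checks directly that $s_r = 0$ whenever $r \notin R$ (by producing a preimage of $w_{j,r}$ in a column where the corresponding $N(B)$-row has already ended), and since $s_{r'}(\phi_r) = \delta_{r,r'}$ the $\phi_r$ span. Your approach instead solves the full linear system for the matrix coefficients $c^{k,m}_l$, deduces the single parameter $d_t$, and then reads off the constraints from boundary relations. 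This is a legitimate alternative and ultimately yields the same answer, but it trades a one-line structural observation for substantial bookkeeping---some of which you explicitly leave unfinished.

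Two places where the bookkeeping is not yet tied off. First, your derivation of $j < a_r$ appeals to ``the socle $u_{j,j}$ must lie in the image,'' which is the paper's socle argument, not your coefficient analysis; within your framework the condition $j < a_r$ should instead appear as the requirement that $c^m_{m+t}$ be defined for some $m$ (one checks that $m+t < a_m$ is equivalent to $j < a_r$ for all $m \le r$, using $a_m \le a_r - (r-m)$). Second, the claim that the chain $c^1_{1+t} = c^2_{2+t} = \cdots$ is unbroken needs care when some row $m+t$ of $N(B)$ is empty or when $m+t \ge a_m$; the shift relation only links adjacent $m$'s when the relevant basis vectors exist on both sides. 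These gaps are fillable, but the paper's socle argument sidesteps them entirely.
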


\begin{proof}
Let 
\begin{equation*}
R := \bigl\{ r \in \{1, \dots, i \}: r \le j < a_r, \text{ and } a_{r-l} \le b_{j-l} \text{ for } l = 0, \dots, r-1 \bigr\} 
\end{equation*}

We construct a map $\varphi: R \to \mathrm{Hom}(N(A),N(B))$, and then show that it gives a bijection between $R$ and a basis for $\mathrm{Hom}(N(A),N(B))$. This will yield the desired result.

For simplicity of notation, we will use $w_{k,l} $ for the basis for $ N(A) $ and $ w_{k,l}' $ for the basis for $ N(B) $.

For each $ r \in R$, let us define $ \varphi(r) = \phi_r $ to be the homomorphism which takes the $ r $th row of $N(A)$ to the bottom row of $ N(B) $ and then extended to higher rows in the obvious way.  More explicitly, we define $ \phi_r $ by 
\begin{equation*}
\phi_r(w_{k, r-l}) = \begin{cases}
&w'_{k, j-l}, \ \text{ if } l \ge 0, \text{ and } k \ge j-l \\
&0, \ \text{ otherwise}
\end{cases}
\end{equation*}
Such a $ w'_{k,j-l} $ will always exist since $ j-l \le k < a_{r-l} \le b_{j-l} $.  A simple check using the structure of Maya modules (\ref{eq:Maya}) shows that $ \phi_r $ is a homomorphism.

Now, suppose that $ \psi $ in any element of $ \Hom(N(A), N(B)) $.  Since we have explicit bases for $ N(A) $ and $N(B) $ we may consider the matrix coeffients involving $ w'_{j,j}$, the generator of the socle of $N(B) $.  $\psi $ takes $ N(A)_j $ to $ N(B)_j $ so for each $ r \in \{1, \dots, i \} $ such that $ r \le j < a_r $, we get a matrix coefficient $ s_r $, such that 
\begin{equation*}
\psi(w_{j,r}) = s_r w'_{j,j} + \cdots.
\end{equation*}

Note that if all the $ s_r $ are zero, then $ \psi = 0 $.  This is because every submodule of $ N(B) $ must contain $ w'_{j,j}$ (since $ w'_{j,j}$ spans the socle of $ N(B) $) and so any non-zero homomorphism from $ N(A) $ to $ N(B) $ must hit $ w'_{j,j} $.  Thus, the collection $ s_r $ completely determines $ \psi$.  

Also note that if $ r \notin R $, then $ a_{r-l} > b_{j-l} $ for some $ l $.  This means that we can find some non-zero $ w \in N(A) $ and $ p \in \Lambda $ such that $ pw = w_{j,r} $ but $\psi(w) = 0 $ (in fact we can choose $ w = w_{a_{r-l} - 1, r-l} $).  Hence for $ r \notin R $, we see that $ s_r = 0 $.

Combining these observations, we see that $ \psi = \sum_{r \in R} s_r \phi_r $.  Thus the $ \phi_r $ span $\Hom(N(A), N(B))$.  These $\phi_r $ are linearly independent since $ \phi_r $ vanishes on $ a_{j, r'} $ for $ r > r'$.  Thus the $ \phi_r $ form a basis for $ \Hom(N(A), N(B)) $ as desired.
\end{proof}

\subsection{Connection with MV polytopes}
We now make the connection between Theorem \ref{th:homAB} and MV polytopes.

For each subset $ B$ of $ \{1, \dots, n\}$ of size $ i $, we may consider the truncated permutahedron $ P(B) $ which is defined as 
\begin{equation*}
P(B) := conv(\{1_{C} - 1_{\{1, \dots, j\}} : \text{ $C $ is a subset of $\{1, \dots, n\}$ of size $ j $ and $ C \le B $ }\})
\end{equation*}

These polytopes $ P(B) $ are relevant since Naito-Sagaki \cite{NS} have shown that these are the MV polytopes associated to the vertices of the crystal corresponding to the minuscule representation $ \Lambda^i \mathbb{C}^n$.  These vertices are precisely labelled by subsets $ B $ of size $ i $.

\begin{Corollary}
For each subset $ B $ of $\{1, \dots, n\}$, the max value of $ \langle 1_A, \rangle $ on the polytope $ P(B) $ is given by $\dim \Hom(N(A),N(B))$.
\end{Corollary}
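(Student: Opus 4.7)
The plan is to evaluate $\langle 1_A, \cdot\rangle$ at the vertices of $P(B)$ and reduce the corollary to a combinatorial optimization matching the formula of Theorem \ref{th:homAB}. Since $\langle 1_A, \cdot\rangle$ is linear, its maximum on $P(B)$ is attained at some vertex $1_C - 1_{\{1,\dots,j\}}$, giving $|A \cap C| - m_0$ with $m_0 := |A \cap \{1,\dots,j\}|$. Thus the corollary reduces to proving $\max_C |A \cap C| = |R| + m_0$, the maximum being over those $j$-subsets $C$ defining vertices of $P(B)$ (those with $c_l \le b_l$ for every $l$, when $B$ and $C$ are written in increasing order) and $R$ being the index set from Theorem \ref{th:homAB}.

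The main technical step, and the only obstacle worth naming, is a Hall-type extension criterion: an $m$-subset $A' = \{a'_1 < \cdots < a'_m\} \subseteq A$ extends to some valid $C$ if and only if $a'_s \le b_{j-m+s}$ for all $s = 1,\dots,m$. Necessity is immediate since $a'_s$ occupies some position $p$ of $C$ with $p \le j-m+s$, so $a'_s = c_p \le b_p \le b_{j-m+s}$; sufficiency follows from a direct construction (for example, take $C := A' \cup F$ with $F$ the $j-m$ smallest integers in $\{1,\dots,n\} \setminus A'$, and check $|C \cap [1,b_l]| \ge l$ for every $l$ using the hypothesis). Since the criterion only relaxes when the $a'_s$ are made smaller, $\max_C |A \cap C|$ is attained by $A' = \{a_1, \dots, a_m\}$; hence it equals $\max R'$, where
\begin{equation*}
R' := \{r \in \{1,\dots,\min(i,j)\} : a_s \le b_{j-r+s} \text{ for all } s = 1,\dots,r\}
\end{equation*}
(taking $\max \emptyset = 0$).

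Two short observations then close the argument. First, $R'$ is an initial segment of the positive integers: if $r \in R'$ and $r' < r$, then $b_{j-r'+s} \ge b_{j-r+s} \ge a_s$, so $r' \in R'$; hence $\max R' = |R'|$. Second, by its definition $R$ equals $\{r \in R' : a_r > j\}$, so $R' = R \sqcup (R' \cap \{1,\dots,m_0\})$, and the only inclusion needing verification is $\{1,\dots,m_0\} \subseteq R'$. For $r \le m_0$ and $s \le r$, the chain $a_s < a_{s+1} < \cdots < a_r \le j$ (the last inequality from the definition of $m_0$) forces $a_s \le j - (r - s) = j - r + s$, while $b_{j-r+s} \ge j - r + s$ always; so $a_s \le b_{j-r+s}$. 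Combining, $|R'| = |R| + m_0$, and the polytope maximum equals $|R'| - m_0 = |R| = \dim\Hom(N(A), N(B))$ by Theorem \ref{th:homAB}.
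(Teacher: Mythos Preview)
Your proof is correct and follows essentially the same approach as the paper: both reduce the polytope maximum to $\max_{C \le B}|A\cap C|$, identify this maximum with the largest $r$ satisfying $a_s \le b_{j-r+s}$ for $s=1,\dots,r$ via the same ``smallest $j$-set containing $\{a_1,\dots,a_r\}$'' construction, and then relate that $r$ to $|R|$ by subtracting $m_0=|A\cap\{1,\dots,j\}|$. Your write-up is a bit more explicit (you state the Hall-type extension criterion and the initial-segment property of $R'$ as separate lemmas, and your argument handles $i\le j$ and $i>j$ uniformly rather than by a case split), but the underlying ideas are the same.
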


\begin{proof}
Assume for simiplicity that $ i \le j $.  A similar proof holds in the $ i > j $ case.

By Theorem \ref{th:homAB}, $\dim \Hom (N(A), N(B)) = r - s $ where $ r $ is the maximal element of $ \{1, \dots, i \} $ such that $ a_{r - l} \le b_{j-l} $ for $ j = 0, \dots, r-1 $ and $ s = | \{1, \dots, j \} \cap A | $.

Now, we claim that $ r = \max_{C \le B} | C \cap A | $.  First note that if we choose $ C $ to be the smallest possible $j $ element subset of $ \{1, \dots, n\} $ such that $  \{a_1, \dots, a_r \} \subset C $, then $ C \le B $ and $ |C \cap A| \ge r $.
On the other hand, for any $ C \le B $, we claim that $ | C \cap A | \le r $.  To see why this is the case, note that by the definition of $ r$, not all the inequalities 
\begin{equation} \label{eq:someineq}
a_{r+1} \le b_j, a_r \le b_{j-1}, \dots, a_1 \le b_{j-r} .
\end{equation} 
can hold.  So now suppose that $ C \le B $ and $ C \cap A $ contains at least $ r+1 $ elements.  Let us choose $ r+1 $ of these elements and order them  $ a_{i_1} < \dots < a_{i_{r+1}} $.  Then since $ C \le B $, we find that
\begin{equation*}
a_{i_{r+1}} \le b_j, \dots, a_{i_1} \le b_{j-r} .
\end{equation*}
But since $ a_{i_l} \ge a_l $ for all $ l $, this implies that all the inequalities (\ref{eq:someineq}) hold --- a contradiction.  Hence we conclude that $r = \max_{C \le B} | C \cap A | $.

Thus 
\begin{align*}
\dim \Hom (N(A), N(B)) = r - s &= \max_{C \le B} | C \cap A | - |  \{1, \dots, j \} \cap A | \\
&= \max_{C \le B} \langle 1_A, 1_C - 1_{\{1, \dots, j\}} \rangle
\end{align*}
as desired.
\end{proof}

\section{Description of irreducible components} \label{se:tableaux}
 
\subsection{Savage's description of the components}
Alistair Savage has given a description of the components of $ Rep(w)_v $ in terms of tableaux.  We would like to reformulate his description in terms of Hom spaces.

Let $ Tab_\mu(\lambda)$ denote the set of semistandard Young tableaux (SSYT) of shape $ \lambda $ and content $ \mu$.  If $ X $ is a box in a SSYT $ T $, then we will write $ r(X) $ for the row of $X $ and $ c(X) $ for the content of $ X$.

For each $ T \in Tab(\lambda)_\mu $, Savage has identified a component $ C_T $ of $ Rep(w)_v $. 

Let $ T \in Tab(\lambda)_\mu $.  A $ \Lambda$-module is said to be of type $ T$ if there exists a basis for $ M $ with the following properties.  For each box $ X $  in $ T $, there are vectors $$ w_{r(X)}^X, \dots, w_{c(X) -1}^X \in M$$ and the collection of all these vectors (over all boxes $ X$) forms a basis for $M $.  Moreover, we have 
\begin{equation} \label{eq:Mstruct}
(j \to j-1) (w_j^X) = w_{j-1}^X, \quad (j \to j+1)(w_j^X) = \sum_Y d_Y^X w_{j+1}^Y
\end{equation}
for some scalars $ d_Y^X $, where the sum varies over all those boxes $ Y $ such that $ r(Y) < r(X) \le c(Y) < c(X) $.

Let $ C_T = \overline{\{ x \in Rep(w)_v : M_x \text{ is of type } T \}}$ denote the closure of the locus of those modules of type $ T$.

\begin{Theorem}[{\cite[Section 5]{Sav}}] \label{th:Sav} 
$C_T $ is a component of $ Rep(w)_v $ and this provides a bijection between the components of $ Rep(w)_v $ and $ Tab(\lambda)_\mu$.
\end{Theorem}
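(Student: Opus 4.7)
The plan is to break the bijection into three parts: (i) each $C_T$ is an irreducible subvariety of the expected dimension; (ii) distinct tableaux yield distinct $C_T$; and (iii) both sides have the same cardinality.  For (iii), Lusztig's construction tells us that the number of irreducible components of $Rep(w)_v$ equals the dimension of the weight space $V(\lambda)_\mu$, which in type $A$ is the Kostka number $|Tab(\lambda)_\mu|$ by the classical semistandard-tableau description of Schur polynomials.  So (i) together with (ii) and this count forces $T \mapsto C_T$ to be a bijection.

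For (i), I would consider the locus $U_T = \{ x \in Rep(w)_v : M_x \text{ is of type } T\}$ and show it is irreducible by exhibiting it as a constructible image of the affine space of scalars $d_Y^X$ from \eqref{eq:Mstruct}, quotiented by the obvious change-of-basis action within a type-$T$ family.  To upgrade $C_T = \overline{U_T}$ to an irreducible component, I would compute $\dim C_T$ and compare it with the known dimension of an irreducible component of $Rep(w)_v$, using the standard equidimensionality results for Lusztig quiver varieties.  Matching the two dimensions implies $C_T$ is a component (rather than a proper subvariety of one).

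For (ii), I would use module-theoretic invariants.  The natural candidates, given Section \ref{se:class}, are the numbers $\dim \Hom(N(A), M_x)$ for generic $x \in U_T$ and various subsets $A \subset \{1, \dots, n\}$.  Because type-$T$ modules admit an explicit basis indexed by boxes of $T$ with the structure \eqref{eq:Mstruct}, the hom-space computations carried out in the remainder of Section \ref{se:tableaux} translate these invariants into explicit combinatorial functions of the tableau.  Different tableaux then produce different dimension profiles, so $C_T \ne C_{T'}$ whenever $T \ne T'$.

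The main obstacle is the dimension count in (i).  The preprojective relations impose quadratic constraints on the scalars $d_Y^X$, and it requires careful bookkeeping to determine which parameters are genuinely free and how to factor out the change-of-basis ambiguity inside a type-$T$ family.  A priori one only obtains $\dim C_T$ at most the expected component dimension; showing the reverse inequality --- that the type-$T$ locus is already large enough to fill out a whole component --- is the crux of the argument, and any slack here would allow multiple tableaux to collapse onto the same component or miss some component entirely, breaking the count in (iii).
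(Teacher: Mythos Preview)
The paper does not actually prove this theorem: it is stated with the attribution \cite[Section 5]{Sav} and no proof is given.  So there is no ``paper's own proof'' to compare against; the result is imported wholesale from Savage's work and then used as a black box in the final theorem of Section~\ref{se:tableaux}.

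Your outline is a plausible strategy for how such a result might be established, and the counting step (iii) is fine.  But note two things.  First, what you identify as ``the main obstacle'' in (i) is genuinely the entire content of Savage's theorem, and your proposal does not get past it: you say one must check that the type-$T$ locus is large enough, but you give no mechanism for doing so.  Without that, (i) remains an assertion rather than an argument.  Second, in (ii) you propose using $\dim \Hom(N(A), M_x)$; the paper's later computations (Theorem~\ref{th:HomT}) go in the opposite direction, computing $\dim \Hom(M_x, N(A))$ for connected $A$, and it is these numbers $g_A(T)$ that are shown to determine $T$.  This is not fatal to your plan, but it is worth being precise about which Hom direction actually separates tableaux.

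In short: your proposal is a reasonable high-level sketch, but since the paper offers no proof here, the honest answer is that there is nothing to compare, and the hard step you flag is exactly what is deferred to \cite{Sav}.
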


\subsection{Description of components by Hom spaces}
We would like to reformulate Savage's description.  The key will be the following generalization of Theorem \ref{th:homAB}.  A connnected subset of $ \{1, \dots, n\} $ is one of the form $ \{ t-i+1, t-i+2, \dots, t \} $.

\begin{Theorem}\label{th:HomT}
Let $ M $ be a module of type $ T $ and let $ A = \{ t-i+1, \dots, t \}$ be a connected subset of $ \{1, \dots, n \}$.  Then 
\begin{equation*}
\dim \Hom(M, N(A)) = \text{\# of boxes $X $ in $ T $, such that } r(X) \le i <  c(X) \le t.
\end{equation*}
\end{Theorem}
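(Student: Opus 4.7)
The plan is to prove Theorem \ref{th:HomT} by reducing it to Theorem \ref{th:homAB}. The key insight will be that every module of type $T$ decomposes as a direct sum of single-string Maya modules.

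The first and main step will be to establish that every scalar $d_Y^X$ in the defining relations (\ref{eq:Mstruct}) vanishes. To see this, I would apply the preprojective relation at vertex $r(X)$ to $w_{r(X)}^X$. The right-hand side $(r(X)-1 \to r(X))(r(X) \to r(X)-1)(w_{r(X)}^X)$ is zero, because $w_{r(X)-1}^X$ is not in the string of $X$. The left-hand side $(r(X)+1 \to r(X))(r(X) \to r(X)+1)(w_{r(X)}^X)$ expands to $\sum_Y d_Y^X w_{r(X)}^Y$, summed over boxes $Y$ with $r(Y) < r(X)$ and $c(Y) \ge r(X)+2$ (the latter is required so that $w_{r(X)+1}^Y$ is defined); linear independence of the $w_{r(X)}^Y$ then forces each such $d_Y^X$ to vanish. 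The remaining scalars $d_Y^X$ (those with $c(Y) \le r(X)+1$) do not actually appear in any relation $(j \to j+1)(w_j^X)$, since such contributions would require $c(Y) \ge j+2 \ge r(X)+2$. It follows that all up-arrows vanish on each string of $M$ and different strings do not interact, so $M$ is a direct sum of single-string modules; by Theorem \ref{th:uniquesoc}, each such nonempty single-string module (for a box $X$ with $c(X) > r(X)$) is isomorphic to $N(\{1, \dots, r(X)-1, c(X)\})$, yielding
\begin{equation*}
M \cong \bigoplus_{X \,:\, c(X) > r(X)} N(\{1, \dots, r(X)-1, c(X)\}).
\end{equation*}

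The second step will apply Theorem \ref{th:homAB} to each summand. With $A_X := \{1, \dots, r(X)-1, c(X)\}$ and $B = A = \{t-i+1, \dots, t\}$, the only potentially contributing index $r$ in Theorem \ref{th:homAB} is $r = r(X)$: for $r < r(X)$, the $r$-th entry of $A_X$ equals $r$, violating $r < (A_X)_r$. At $r = r(X)$, the condition $r \le i < (A_X)_{r(X)}$ becomes $r(X) \le i < c(X)$; the leading inequality $(A_X)_{r(X)} \le b_i$ becomes $c(X) \le t$; and the remaining inequalities $(A_X)_{r(X)-l} = r(X)-l \le t-l = b_{i-l}$ (for $l \ge 1$) are automatic from $r(X) \le i < t$. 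Each summand therefore contributes $1$ exactly when $r(X) \le i < c(X) \le t$, and summing over boxes yields the claimed count.

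The main obstacle will be the first step, which requires careful bookkeeping to distinguish the scalars $d_Y^X$ that are forced to vanish by the preprojective relation from those that simply never enter the module structure. Once the decomposition is in hand, the Hom computation reduces mechanically to Theorem \ref{th:homAB}.
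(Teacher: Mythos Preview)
Your argument is correct but takes a genuinely different route from the paper's.  Your key observation---that the preprojective relation at vertex $r(X)$, applied to $w_{r(X)}^X$, forces every scalar $d_Y^X$ with $c(Y)\ge r(X)+2$ to vanish, while those with $c(Y)\le r(X)+1$ never actually occur in any formula $(j\to j+1)(w_j^X)$---is valid under the natural convention $w_{r(X)-1}^X=0$, and it yields the pleasant consequence that every module of type $T$ is isomorphic to the fixed direct sum $\bigoplus_{X:\,c(X)>r(X)} N(\{1,\dots,r(X)-1,c(X)\})$.  The reduction to Theorem~\ref{th:homAB} then goes through exactly as you describe.  (One small wording slip: when ruling out indices $r<r(X)$ you write ``violating $r<(A_X)_r$''; the condition from Theorem~\ref{th:homAB} is $i<a_r$, but since $a_r=r\le i$ your conclusion still holds.)

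The paper, by contrast, never observes that the $d_Y^X$ are forced to vanish.  It treats them as free parameters and builds an explicit basis $\{\phi_X\}$ of $\Hom(M,N(A))$ directly on $M$: each $\phi_X$ sends the $X$-string onto the bottom row of $N(A)$, and is then extended to the remaining strings by a recursion involving the $d_Y^X$ (in your picture all these extension terms are zero anyway).  Your approach is shorter and yields more---it shows a module of type $T$ is unique up to isomorphism---whereas the paper's hands-on construction is more robust: it would survive unchanged if Savage's original notion of ``type $T$'' were broader than the summary here, for instance if $(r(X)\to r(X)-1)(w_{r(X)}^X)$ were not required to vanish.
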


\begin{proof}
The idea is similar to the proof of Theorem \ref{th:homAB}.

To each box $ X $ of $ T $ in the above set, we can define a homomorphism $ \phi_X : M \rightarrow N(A) $ by taking the row indexed by $ X $ to the bottom row of $ N(A) $.  We then extend to all of $ M $.

More explicitly, we define
\begin{equation*}
\phi_X(w_k^X) = w'_{k,i} 
\end{equation*}
for $ k \ge i $ (note that such $ w'_{k,i} $ exists since $ i \le k < c(X) \le t = a_i$).  We define $ \phi_X(w_k^X) = 0 $ for $ k < i $.  We also define $ \phi_X(w_k^Y) = 0 $ for all $ Y \ne X $ with $ r(Y) \ge r(X)$ or $ c(Y) \ge c(X) $.

Now we proceed to define $ \phi_X(w_k^Y) $ for those boxes $ Y $ with $ r(Y) < r(X) $ and $ c(Y) < c(X) $.  We do so by an inductive procedure on $ r(Y) $.  Suppose that $ Y $ is a box with $ r(Y) = r(X) - 1$ and $ c(Y) < c(X) $.  Then we define
\begin{equation*}
\phi_X(w_k^Y) = d_Y^X w'_{k, i-1}.
\end{equation*}
Note that such $ w'_{k, i-1} $ exists since $ k < c(Y) < c(X) \le t $ and so $ k < t-1 = a_{i-1} $.  

Next, suppose that $ Y $ is a box with $ r(Y) = r(X) - 2 $ and $ c(Y) < c(X) $.  Then we define
\begin{equation*}
\phi_X(w_k^Y) = d_Y^X w'_{k, i-1} + \sum_Z d_Y^Z d_Z^X w'_{k, i-2},
\end{equation*}
where the sum ranges over all those boxes $Z $ such that $ r(Z) = r(X) - 1 $ and $ c(Y) < c(Z) < c(X) $.

Continuing in this fashion, we define $ \phi_X $ on all of $M $.  The structure of the module as given in (\ref{eq:Mstruct}) ensures that $ \phi_X $ is a $ \Lambda$-module homomorphism.

The fact that these $ \phi_X $ form a basis for $\Hom(M, N(A)) $ follows along the same lines as in the proof of Theorem \ref{th:homAB}.
\end{proof}

We now combine this result and Savage's theorem.  For each $ A = \{ t-i+1, \dots, t \}$, we define a constructible function
\begin{align*}
 f_A : Rep(w)_v &\rightarrow \mathbb{N} \\
  x &\mapsto \dim(\Hom(M_x,N(A)) 
\end{align*}
Since this is a constructible function it takes a constant value on a constructible dense subset of each component of $ Rep(w)_v $. For each component $ Z \subset Rep(w)_v $, let $ f_A(Z) $ denote this constant value.

Also, for each $ T \in Tab(\lambda)_\mu $, let $ g_A(T) $ denote the number of boxes $ X $ in $ T$ such that $ r(X) \le i < c(X) \le t$.  Note that the collection $ \{ g_A(T) \} $ (where $ A $ ranges over all connected subsets) determines $ T $.

\begin{Theorem}
For each component $ Z \subset Rep(w)_v $, there exists a tableau $ T \in Tab(\lambda)_\mu $ such that $ f_A(Z) = g_A(T) $ for all connected subsets $ A \subset \{1, \dots, n \} $.  This provides a bijection between the components of $ Rep(w)_v $ and the SSYT of shape $ \lambda $ and filling $ \mu $.
\end{Theorem}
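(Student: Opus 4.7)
The plan is to combine Savage's theorem with Theorem \ref{th:HomT}, and to leverage the observation (made just before the theorem) that the numbers $g_A(T)$, as $A$ ranges over connected subsets, determine $T$.

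First, I would apply Theorem \ref{th:Sav} to write $Z = C_T$ for a unique tableau $T \in Tab(\lambda)_\mu$. By the definition of $C_T$, the locus $U_T := \{x \in Rep(w)_v : M_x \text{ is of type } T\}$ is a constructible dense subset of $Z$. Theorem \ref{th:HomT} says that for every $x \in U_T$, $\dim \Hom(M_x, N(A)) = g_A(T)$ for every connected subset $A \subset \{1, \dots, n\}$; in particular $f_A$ is identically equal to $g_A(T)$ on the constructible dense subset $U_T$. Since $f_A$ is constructible, its constant generic value $f_A(Z)$ on $Z$ must coincide with the value it takes on any constructible dense subset, so $f_A(Z) = g_A(T)$. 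This proves existence.

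For the bijection statement, note that Savage's theorem already establishes that $T \mapsto C_T$ is a bijection between $Tab(\lambda)_\mu$ and the set of components of $Rep(w)_v$, so we only need to identify our assignment $Z \mapsto T$ with the inverse of Savage's map. But the step above shows exactly that: $Z = C_T$ is sent to $T$. Uniqueness of the $T$ produced by the theorem is a consequence of the remark preceding the statement, that the collection $\{g_A(T)\}_A$ determines $T$; thus any other $T'$ with $g_A(T') = f_A(Z)$ for all connected $A$ must equal $T$.

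I do not anticipate a serious obstacle here since the main technical content has been packaged into Theorems \ref{th:Sav} and \ref{th:HomT}; the only subtle point is verifying that the value extracted from the dense type-$T$ locus really is what one means by ``the'' constant value $f_A(Z)$ of a constructible function on a component. This is automatic: any two constructible dense subsets of an irreducible variety intersect, so a constructible function that is constant on each must take the same constant value on both. Thus the generic value $f_A(Z)$ is forced to equal the value $g_A(T)$ that Theorem \ref{th:HomT} pins down on $U_T$, and the proof concludes.
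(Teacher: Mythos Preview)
Your proposal is correct and follows essentially the same approach as the paper: apply Theorem~\ref{th:Sav} to identify each component with some $C_T$, then invoke Theorem~\ref{th:HomT} on the dense type-$T$ locus to pin down $f_A(Z)=g_A(T)$. Your write-up is in fact more careful than the paper's, which simply states that combining the two theorems yields the result; your discussion of why the value on the dense constructible locus $U_T$ agrees with the generic value $f_A(Z)$, and your explicit use of the remark that $\{g_A(T)\}_A$ determines $T$, are welcome clarifications.
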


\begin{proof}
Theorem \ref{th:HomT} shows that if $ M $ is of type $T$, then $ f_A(M) = g_A(T) $.  Theorem \ref{th:Sav} shows that for each component $ Z $, there exists a unique tableau $ T $ such that there is a dense subset of $ Z $ consisting of modules of type $ T $.  Combining these two results, we obtain the desired result.
\end{proof}

\end{document}